\theoremstyle{plain}
\newtheorem{teo}{Theorem}[section]
\newtheorem{prop}[teo]{Proposition}		
\newtheorem{rmk}[teo]{Remark}
\theoremstyle{definition}
\newtheorem{df}[teo]{{Definition}}
\DeclareMathOperator{\diam}{diam}
\DeclareMathOperator{\dist}{dist}
\newcommand{\expc}{\eta}
\newcommand{\R}    {\mathbb R}
\newcommand{\Z}  {\mathbb Z}
\renewcommand{\epsilon}{\varepsilon}
\title{Minimal expansive systems and spiral points}
\author{Alfonso Artigue}
\begin{document}

\maketitle

\begin{abstract}
It is known that if a compact metric space $X$ admits a minimal expansive homeomorphism 
then $X$ is totally disconnected. In this note we give a short proof of this result 
and we analyze its extension to expansive flows.
\end{abstract}

\section{Introduction}
In \cite{Ma} Mañé proved that if a compact metric space $(X,\dist)$ admits a minimal expansive homeomorphism 
then $X$ is \emph{totally disconnected}, i.e., every connected subset of $X$ is a singleton.
A homeomorphism $f\colon X\to X$ is \emph{expansive} if there is 
$\expc>0$ such that if $\dist(f^n(x),f^n(y))<\expc$ for all $n\in\Z$ then $y=x$. 
We say that $f$ is \emph{minimal} if the set $\{f^n(x):n\geq 0\}$ is dense in $X$ for all $x\in X$.
In Theorem \ref{teoMaKaMin} we give a short proof of Mañé's result.
% This may have positive consequences in the extension of the result for flows as we will explain.

Some definitions, results and proofs on discrete-time dynamical systems 
are trivially extended to flows, i.e., real actions $\phi\colon\R\times X\to X$. 
But, this is not the case for the problem considered in this note. 
In the context of flows we say that $\phi$ is \emph{expansive} \cite{BW} if for all 
$\epsilon>0$ there is $\expc>0$ such that if $\dist(\phi_t(x),\phi_{h(t)}(y))<\expc$ 
for some increasing homeomorphism $h\colon\R\to\R$, $h(0)=0$, 
then $y=\phi_s(x)$ for some $s\in (-\epsilon,\epsilon)$. 
In the case of expansive flows, Mañé's proof was partially extended by Keynes and Sears \cite{KS}.
They proved that if $X$ admits a minimal expansive flow \emph{without spiral points} (see Section \ref{secExpFlow}) 
then the topological dimension of $X$ 
is 1, i.e., local cross sections are totally disconnected.
% Hopefully, our proof could be extended to flows. 
% Our proof is based on some remarks of Mañé's paper \cite{Ma}. 
% In some sense we complete his initial idea.

Our proof of Theorem \ref{teoMaKaMin} avoids the problem of spiral points in the discrete-time case. 
However, its extension to flows is far from trivial, at least for the author. 
In the final section some remarks are given in relation with spiral points for homeomorphisms and flows.

\section{Minimal homeomorphisms}
Let $(X,\dist)$ be a compact metric space and assume that 
$f\colon X\to X$ is a homeomorphism. 
Let us start proving a general {result not} involving expansivity. 
Recall that a \emph{continuum} is a compact connected set.
A continuum is non-trivial if it is not a singleton.
We denote by $\dim(X)$ the topological dimension of $X$ \cite{HW}. 
For the next result we need to know that $\dim(X)>0$ if and only if $X$ has a non-trivial connected component.

\begin{rmk}
\label{rmkSubCont}
  {For a continuum $C$ it is known that the space of its subcontinua with the Hausdorff metric is arc connected, see \cite{IN}. 
Therefore, since the diameter function is continuous, given $0<\delta<\diam(C)$ there is a subcontinuum $C'\subset C$ 
with $\diam(C')=\delta$.}
\end{rmk}

\begin{prop}
\label{propContEst}
If $\dim(X)>0$ then for all $\delta>0$ there is a non-trivial continuum $C\subset X$ such that 
$\diam(f^n(C))\leq\delta$ for all $n\geq 0$ or for all $n\leq 0$.
\end{prop}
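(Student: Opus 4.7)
The plan is to argue by dichotomy. If some non-trivial continuum $C$ satisfies $\diam(f^nC)\leq\delta$ for all $n\geq 0$, we are done; otherwise I assume that every non-trivial continuum has some positive iterate of diameter strictly larger than $\delta$, and under this assumption I construct a non-trivial continuum satisfying the bound for all $n\leq 0$. The strategy is to inductively build non-trivial continua $K^{(k)}\subset X$, each of diameter exactly $\delta$, whose backward iterates $f^{-i}(K^{(k)})$ have diameter $\leq\delta$ on an interval $0\leq i\leq N_k$ with $N_k\to\infty$, and then pass to a Hausdorff limit.

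For the base case, start with a non-trivial continuum $K^{(0)}\subset X$ (which exists since $\dim(X)>0$) of diameter $\leq\delta$, using Remark~\ref{rmkSubCont} to shrink it if necessary. The standing assumption yields a smallest $n_0\geq 1$ with $\diam(f^{n_0}K^{(0)})>\delta$, and then Remark~\ref{rmkSubCont} provides a subcontinuum $K^{(1)}\subset f^{n_0}(K^{(0)})$ of diameter exactly $\delta$. Since $f$ is a homeomorphism, $f^{-i}(K^{(1)})\subset f^{n_0-i}(K^{(0)})$ for $0\leq i\leq n_0$, and by the minimality of $n_0$ the latter has diameter $\leq\delta$ for $1\leq i\leq n_0$, so $\diam(f^{-i}K^{(1)})\leq\delta$ holds on $0\leq i\leq n_0=:N_1$. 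Inductively, given $K^{(k)}$ of diameter $\delta$ with the bound on $[0,N_k]$, let $m_k\geq 1$ be least with $\diam(f^{m_k}K^{(k)})>\delta$ and take $K^{(k+1)}\subset f^{m_k}(K^{(k)})$ of diameter $\delta$. The same argument gives the bound on $[0,m_k]$, and it extends to $[0,m_k+N_k]$ because $f^{-m_k}(K^{(k+1)})\subset K^{(k)}$ and the bound already known on $K^{(k)}$ propagates under further backward iteration. Thus $N_{k+1}:=m_k+N_k\geq N_k+1$ grows without bound.

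To finish, the hyperspace of subcontinua of $X$ is compact in the Hausdorff metric, so some subsequence $K^{(k_j)}$ converges to a continuum $C$. Continuity of the diameter yields $\diam(C)=\delta>0$, so $C$ is non-trivial, and continuity of the induced action of $f^{-i}$ on the hyperspace gives $\diam(f^{-i}C)\leq\delta$ for every $i\geq 0$, since eventually $i\leq N_{k_j}$. Therefore $C$ is the desired continuum for the backward direction, completing the dichotomy. The main obstacle is ensuring that the Hausdorff limit does not collapse to a singleton; this is precisely why the construction normalizes $\diam(K^{(k)})=\delta$ at every step, using Remark~\ref{rmkSubCont} to trim continua to the prescribed diameter.
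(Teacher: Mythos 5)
Your proof is correct and follows essentially the same route as the paper's: trim a continuum to diameter exactly $\delta$ (via Remark~\ref{rmkSubCont}) at the first forward time its diameter exceeds $\delta$, note that its backward iterates then stay below $\delta$ on an increasingly long window, and pass to a Hausdorff limit, using continuity of the diameter to rule out collapse. The only cosmetic difference is that you chain the construction from a single seed so the window grows by at least one at each step, whereas the paper starts from continua of diameter tending to $0$ and deduces $k_n\to+\infty$ from continuity; both versions are sound.
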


\begin{proof}
By contradiction assume that there are $\delta>0$, a sequence of continua $C_n$ and 
positive integers $k_n$ such that $\diam(C_n)\to 0$ and $\diam(f^{k_n}(C_n))>\delta$. 
Considering subcontinua and different times $k_n$, we can also assume that 
$\diam(f^{k_n}(C_n))=\delta$ {(Remark \ref{rmkSubCont})} and $\diam(f^i(C_n))\leq\delta$ if $0\leq i\leq k_n$. 
Since $\diam(C_n)\to 0$ we have that $k_n\to+\infty$. 
Take a limit continuum $C$ of {$f^{k_n}(C_n)$} in the Hausdorff metric. 
By the continuity of $f$ we have that $\diam(f^i(C))\leq\delta$ for all $i\leq 0$. 
This proves the result.
\end{proof}

\begin{rmk}
  It can be the case that for every non-trivial continuum {$C\neq X$} it holds that: 
  $\lim_{n\to+\infty}\diam(f^n(C))=\diam(X)$ and $\lim_{n\to-\infty}\diam(f^n(C))=0$. 
  This is the case on an expanding attractor as for example the solenoid and the nonwandering set 
  of a derived from Anosov.
\end{rmk}

In \cite{Ka93} Kato introduced a generalization of expansivity that allowed him to extend several 
results of expansive homeomorphisms, including those obtained in \cite{Ma}. 
We recall that $f$ is \emph{cw-expansive} {(\emph{continuum-wise expansive})}, 
if there is $\expc>0$ such that if $C\subset X$ is connected and $\diam(f^n(C))<\expc$ for all $n\in\Z$ 
then $C$ is a singleton.
In this case we say that $\expc$ is a \emph{cw-expansive constant} for $f$.
We say that $C\subset X$ is $\expc$-\emph{stable} if $\diam(f^i(C))\leq\expc$ for all $i\geq 0$.

\begin{teo}{\cites{Ma,Ka93}}
\label{teoMaKaMin}
If $f\colon X\to X$ is a minimal homeomorphism of a compact metric space $(X,\dist)$ and $\dim(X)>0$ then for all 
$\expc>0$ there is a non-trivial continuum $C\subset X$ such that $\diam(f^n(C))\leq\expc$ for all $n\in\Z$.
% If $f\colon X\to X$ is a minimal cw-expansive homeomorphism then 
%  $\dim(X)=0$.
\end{teo}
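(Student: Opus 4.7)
The plan is to first obtain a non-trivial continuum that is $\eta$-stable on one side via Proposition~\ref{propContEst}, and then promote it to a bi-sided $\eta$-stable continuum by a limiting argument in the hyperspace of subcontinua.

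Fix $\eta>0$ and apply Proposition~\ref{propContEst} with $\delta=\eta$ to obtain a non-trivial continuum $C_0$ satisfying $\diam(f^n(C_0))\leq \eta$ either for all $n\geq 0$ or for all $n\leq 0$. Since $f^{-1}$ is also a minimal homeomorphism of $X$ and the conclusion of the theorem is symmetric under $f\leftrightarrow f^{-1}$, I may assume without loss of generality the forward case.

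Next, let $\hyper{X}$ denote the compact metric space of non-empty subcontinua of $X$ with the Hausdorff metric, and let $\hat f\colon \hyper{X}\to\hyper{X}$ be the homeomorphism induced by $f$. Consider the $\omega$-limit set $\omega_{\hat f}(C_0)$, a non-empty closed $\hat f$-invariant subset. Every $C\in \omega_{\hat f}(C_0)$ satisfies $\diam(f^m(C))\leq \eta$ for all $m\in\Z$: writing $C=\lim_k f^{n_k}(C_0)$ with $n_k\to+\infty$, for any fixed $m$ we have $n_k+m\geq 0$ eventually, hence $\diam(f^{n_k+m}(C_0))\leq \eta$, and continuity of the diameter under Hausdorff convergence yields $\diam(f^m(C))\leq \eta$. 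Thus any non-trivial element of $\omega_{\hat f}(C_0)$ is the continuum we seek.

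The main difficulty is handling the case in which $\omega_{\hat f}(C_0)$ consists only of singletons, i.e.\ $\diam(f^n(C_0))\to 0$; in this situation minimality of $f$, lifted to the hyperspace, forces $\omega_{\hat f}(C_0)=X$ when singletons are identified with points. To exclude this scenario I would argue by contradiction: assuming no non-trivial continuum has all $f$-iterates of diameter $\leq \eta$, choose $n_1\geq 1$ minimal with $\diam(f^{-n_1}(C_0))>\eta$, and by Remark~\ref{rmkSubCont} extract a subcontinuum $K_1\subseteq f^{-n_1}(C_0)$ with $\diam(K_1)=\eta$. The minimality of $n_1$, combined with $C_0$ being forward $\eta$-stable, implies $K_1$ is also forward $\eta$-stable. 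Iterating yields a sequence $(K_i)$ of non-trivial forward $\eta$-stable continua of diameter exactly $\eta$, and a Hausdorff accumulation point $K^*$ is a non-trivial forward $\eta$-stable continuum of diameter $\eta$. The delicate step --- which I expect to be the main obstacle --- is to show that the backward escape times $n_i\to+\infty$ along some subsequence, so that $K^*$ is also backward $\eta$-stable, contradicting the assumption and completing the proof.
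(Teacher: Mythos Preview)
Your proposal has a genuine gap at precisely the point you flag as delicate, and the difficulty is worse than you suggest: under your contradiction hypothesis (no non-trivial continuum has all iterates of diameter $\leq\eta$), the backward escape times $n_i$ are necessarily \emph{uniformly bounded}. Indeed, if some subsequence satisfied $n_{i_k+1}\to\infty$, then a Hausdorff limit $K^*$ of the $K_{i_k}$ would have $\diam(K^*)=\eta$, would be forward $\eta$-stable (as a limit of forward $\eta$-stable continua), and for each fixed $m\geq 0$ eventually $n_{i_k+1}>m$, so $\diam(f^{-m}(K_{i_k}))\leq\eta$ by minimality of $n_{i_k+1}$, hence $\diam(f^{-m}(K^*))\leq\eta$. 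Thus $K^*$ would already be the bi-sided $\eta$-stable continuum whose existence you assumed away. In other words, the conclusion you hope to establish is exactly what the contradiction hypothesis forbids; the argument stalls with $n_i$ bounded and no contradiction in sight. Notice also that minimality of $f$ is never used in this part of your outline, yet any cw-expansive homeomorphism on a positive-dimensional space (Anosov, solenoid, \dots) satisfies your contradiction hypothesis for small $\eta$ and exhibits bounded $n_i$, so minimality must enter in an essential way.

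The paper's proof takes the opposite tack: it \emph{exploits} the uniform backward growth that blocks your approach. Assuming cw-expansivity, one gets $m>0$ such that every $\eta$-stable continuum of diameter $\eta/3$ satisfies $\diam(f^{-m}(C))>\eta$. Passing to a minimal set $Y$ for $g=f^m$ and fixing a small open set $U\subset Y$ with $\diam(U)<\eta/3$, one builds continua $C_{k+1}\subset g^{-1}(C_k)\setminus U$ inductively, the uniform growth guaranteeing a surviving component of diameter $\geq\eta/3$ at each step. Any point of $\bigcap_{k\geq 0} g^{k}(C_k)$ then has backward $g$-orbit disjoint from $U$, contradicting minimality of $g$ on $Y$. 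That orbit-avoids-open-set argument is where minimality does its work, and it is the ingredient your outline is missing.
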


\begin{proof}
Arguing by contradiction assume that $f$ is minimal, cw-expansive and $\dim(X)>0$. 
Consider a cw-expansive constant $\expc>0$. 
It is known \cite{Ka93} that in this case there is $m>0$ such that if $C$ 
is a $\expc$-stable continuum and $\diam(C)=\expc/3$ then
$\diam(f^{-m}(C))>\expc$. The proof is similar to the one of Proposition \ref{propContEst}.

Let $Y\subset X$ be a minimal set for $g=f^m$. 
In this paragraph we will show that $\dim(Y)=0$.
Take an open set $U\subset Y$ such that $\diam(U)<\expc/3$. 
By contradiction assume that $\dim(Y)>0$. 
By Proposition \ref{propContEst} there is a $\expc$-stable 
continuum $C_0\subset Y$ for $g$ (the case of $g^{-1}$ is analogous). 
Taking a negative iterate of $C_0$ we can assume that $\diam(C_0)\geq\expc/3$.
Since $\diam(g^{-1}(C_0))>\expc$ 
and $\diam(U)<\expc/3$ there is a component $C_1$ of $g^{-1}(C_0)\setminus U$ with 
$\diam(C_1)\geq \expc/3$. 
In this way we construct a sequence of continua $C_k$ such that $\diam(C_k)\geq\expc/3$, 
$C_{k+1}\subset g^{-1}(C_k)$ and $C_k\cap U=\emptyset$ for all $k\geq 0$. 
Take $x\in \cap_{k\geq 0}g^{k}(C_k)$. 
Then $g^{-k}(x)\notin U$ for all $k\geq 0$.
This contradicts that $Y$ is minimal for $g$ and proves that $\dim(Y)=0$.

Now, since $f\colon X\to X$ is minimal, we have that $X=\cup_{i=1}^mf^i(Y)$ (disjoint union). 
Since $\dim(Y)=0$ we conclude that $\dim(X)=0$ and the proof ends.
\end{proof}

\begin{rmk}
Let us illustrate the possible behavior of the diameter of the iterates of a continuum $C$ as in the previous theorem. 
If $f$ is an irrational rotation of a circle then $\diam(f^n(C))$ is constant (for a suitable metric) for all $n\in\Z$. 
In \cite{BKS} a minimal set on the two-dimensional torus is given containing a circle $C$ 
such that $\diam(f^n(C))\to 0$ as $n\to\pm\infty$.
\end{rmk}

\begin{rmk}
In \cite{F} Floyd gave an example of a compact subset $X\subset \R^2$ and a minimal homeomorphism $f\colon X\to X$. 
Each connected component of $X$ is an {interval, some} of them trivial, i.e., {singletons}. 
Therefore, $X$ is a nonhomogeneous space of positive topological dimension. 
This example also shows that on a minimal set there can be points not belonging to a non-trivial stable continuum 
(the trivial components). 
Assuming that $X$ is a Peano continuum and $f\colon X\to X$ is a minimal homeomorphism, 
is it true that every point belongs to a stable continuum?
\end{rmk}

We do not know if the proof of Theorem \ref{teoMaKaMin} can be \emph{translated} to expansive flows.
Another way to translate the result is solving the spiral point problem indicated in the next section. 

% \begin{prop}
% \label{propDico}
% If $f\colon X\to X$ is a minimal homeomorphism of a compact metric space then the following statements 
%   are equivalent: 
%   \begin{enumerate}
%     \item there is a stable point,
%     \item every point is stable,
%     \item every point is uniformly stable.
%   \end{enumerate}
% \end{prop}
% 
% 
% \begin{thm}
%   If $X$ is a Peano continuum and $f\colon X\to X$ is a minimal homeomorphism 
%   then for all $x\in X$ and every $\expc>0$ there is a non-trivial continuum 
%   $C$ such that $x\in C$ and $\diam(f^n(C))\leq\expc$ for all $n\geq 0$.
% \end{thm}
% 
% \begin{proof}
% If there is a stable point $x\in X$ then by Proposition \ref{propDico} 
% we have that every point is stable. 
% By \cite{BS}*{Theorem 2.11.10} we have that every trajectory is quasi-periodic. 
% By \cite{BS}*{Corollary 2.10.9} we conclude that every trajectory is stable in both directions. 
% Therefore, $C$ can be taken as a neighborhood of $x$. 
% \aca En este caso segun Gottschalk, $f$ es isometria, y es traslacion en un grupo topologico.
% 
% Assume that there is no stable point. 
% \aca aca va directo...
% \end{proof}
% 
% \aca Si $X$ es Peano, todo punto tiene un continuo bi-estable?

\section{Spiral points}
\label{secExpFlow}

In this section we wish to discuss the problem of \emph{translating} Mañé's proof for minimal expansive flows. 
As we said, it is related {to} spiral points. 
We start giving some remarks for spiral points of homeomorphisms, next we consider the corresponding concept for flows.

\begin{df}
  Let $f\colon X\to X$ be a homeomorphism of a metric space $(X,\dist)$. 
  We say that $x\in X$ is a \emph{spiral point} if there is 
  $m\neq 0$ such that $$\lim_{n\to +\infty}\dist(f^n(x),f^{n+m}(x))=0.$$
\end{df}

The proofs by Mañé and Kato of Theorem \ref{teoMaKaMin} use a simple lemma, \cite{Ma}*{Lemma II} and \cite{Ka93}*{Lemma 5.3}.
This lemma {says} that if $x$ is a spiral point 
then $f$ has a periodic point. Therefore, non-trivial minimal sets have no spiral points.
Let us give some details on this result. 

\begin{prop}
  If a homeomorphism of a compact metric space $f\colon X\to X$ has a spiral point $x\in X$ then 
  $\omega(x)$ is a finite union of continua $C_1,\dots,C_m$ such that 
  $f(C_i)=C_{i+1}$ for $i=1,\dots,m-1$, $f(C_m)=C_1$ 
  and $f^m\colon C_i\to C_i$ is the identity.
\end{prop}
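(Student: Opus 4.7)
The plan is to set $g=f^m$ (with $m$ the spiral constant) and decompose $\omega(x)$ according to residues of orbit times modulo $m$. First I would observe that $g$ is the identity on $\omega(x)$: if $z=\lim_k f^{n_k}(x)\in\omega(x)$, then by continuity $g(z)=\lim_k f^{n_k+m}(x)$, and this equals $z$ because $\dist(f^{n_k}(x),f^{n_k+m}(x))\to 0$ by the spiral hypothesis. This gives the last assertion of the proposition for free.

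For $i=1,\dots,m$ let $C_i$ be the set of accumulation points of the subsequence $\{f^{km+i-1}(x)\}_{k\geq 0}$, i.e.\ the $\omega$-limit set of $f^{i-1}(x)$ under iteration of $g$. Each $C_i$ is nonempty, compact and $g$-invariant. Partitioning positive integers by residue modulo $m$, every accumulation point of $\{f^n(x)\}$ lies in some $C_i$, so $\omega(x)=C_1\cup\dots\cup C_m$. Since $f\circ g=g\circ f$ and $\omega_g(g(y))=\omega_g(y)$, one gets $f(C_i)=\omega_g(f^i(x))=C_{i+1}$ with indices cyclic modulo $m$, giving the claimed cyclic permutation.

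The main step, and the only one that really uses the spiral condition in full strength, is to show that each $C_i$ is connected. For this I would invoke the classical fact that if a sequence $(y_k)$ in a compact metric space satisfies $\dist(y_k,y_{k+1})\to 0$ then its set of accumulation points is connected. The proof is a short contradiction: if the limit set splits into disjoint compacta $A,B$ with $\dist(A,B)=\eta>0$, the sequence must cluster near both, so it crosses between neighborhoods of $A$ and $B$ infinitely often, forcing $\dist(y_k,y_{k+1})\geq\eta/2$ arbitrarily late. Applied to $y_k=f^{km+i-1}(x)$, the spiral hypothesis along $n=km+i-1\to\infty$ supplies $\dist(y_k,y_{k+1})\to 0$, so each $C_i$ is connected and hence a continuum. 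This is the one non-trivial input; everything else is bookkeeping on the decomposition.
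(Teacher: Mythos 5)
Your proposal is correct and follows essentially the same route as the paper: both define the pieces as $\omega$-limit sets under $g=f^m$, use the spiral condition plus continuity to make $g$ the identity there, and prove connectedness by the same "if consecutive terms get close, the accumulation set cannot split into two separated compacta" contradiction. The only cosmetic differences are that you spell out the decomposition $\omega(x)=\bigcup_i C_i$ by residues, whereas the paper defines $C_i=f^{i-1}(C_1)$ and leaves that bookkeeping implicit.
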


\begin{proof}
Let $C_1$ be the $\omega$-limit set of $x$ by $f^m$. 
Since $x$ is a spiral point the continuity of $f$ implies that $f^m(y)=y$ for all $y\in C_1$. 
The sets $C_i$ have to be defined as $C_i=f^{i-1}(C_1)$. 
It only {remains} to prove that $C_1$ is connected. 
By contradiction assume that $C_1$ is a disjoint union of two non-empty 
compact sets $D,E\subset X$. 
Define $r=\inf\{\dist(p,q):p\in D, q\in E\}>0$ and $g=f^m$.
Take $n_0$ such that for all $n\geq n_0$ it holds that $g^n(x)\in B_{r/3}(D)\cup B_{r/3}(E)$ 
and $\dist(g^n(x),g^{n+1}(x))<r/3$. 
If $g^{n_0}(x)\in B_{r/3}(D)$ we have that $g^{n}(x)\in B_{r/3}(D)$ for all $n\geq n_0$. 
In this case $E$ is empty. If $g^{n_0}(x)\in B_{r/3}(E)$ we conclude that $D$ is empty. 
This gives a contradiction that proves the result.
\end{proof}

\begin{rmk}
\label{rmkNontrivialSpiral}
Trivial examples of spiral points are points in the stable set of a hyperbolic periodic point. 
  Let us give an example showing that the $\omega$-limit set of a spiral point may not be a finite set. 
  Take a sequence $a_n\in \R$ such that $a_n\to+\infty$ and  $|a_{n+1}-a_n|\to 0$ as $n\to+\infty$
  (for example $a_n=\sqrt{n}$). 
  Define a sequence of points in $\R^2$ by $x_n=(1/n,\sin(a_n))$ for $n\geq 1$. 
  It is easy to prove that $\dist(x_{n+1},x_n)\to 0$ as $n\to+\infty$ (Euclidean metric in $\R^2$). 
  Consider $X\subset\R^2$ a compact set containing $\{x_n:n\geq 1\}$ such that 
  there is $f\colon X\to X$ satisfying $f(x_n)=x_{n+1}$ for all $n\geq 1$. 
  By construction we have that the $\omega$-limit set by $f$ of $x_1\in X$ is 
  $$\omega(x_1)=\{(0,y)\in \R^2:|y|\leq 1\}.$$
  We have that $x_1$ is a spiral point and its $\omega$-limit set is formed by fixed points of $f$. 
\end{rmk}

We now consider the corresponding problem for flows.
Consider $\phi\colon \R\times X\to X$ a continuous flow of the metric space $(X,\dist)$.

\begin{df}
  We say that $x\in X$ is a \emph{kinematic spiral point} if there is 
  $\tau\neq 0$ such that $\dist(\phi_t(x),\phi_{t+\tau}(x))\to 0$ as $t\to+\infty$.
\end{df}

\begin{prop}
  If a continuous flow of a compact metric space $\phi\colon \R\times X\to X$ 
  has a kinematic spiral point $x\in X$ then $\phi_\tau(p)=p$ for all $p\in\omega(x)$.
\end{prop}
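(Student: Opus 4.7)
The statement is much weaker than its discrete counterpart: it only asks for $\phi_\tau$ to fix every point of $\omega(x)$, not for a decomposition of $\omega(x)$ into continua. So the plan is a direct three-line argument exploiting the continuity of the time-$\tau$ map of the flow.

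The plan is to fix $p\in\omega(x)$ and choose a sequence $t_n\to+\infty$ with $\phi_{t_n}(x)\to p$. Applying the continuous map $\phi_\tau\colon X\to X$, one gets $\phi_{t_n+\tau}(x)=\phi_\tau(\phi_{t_n}(x))\to\phi_\tau(p)$. On the other hand, the kinematic spiral hypothesis gives $\dist(\phi_{t_n}(x),\phi_{t_n+\tau}(x))\to 0$, so the two sequences $\phi_{t_n}(x)$ and $\phi_{t_n+\tau}(x)$ must share the same limit. Comparing, $\phi_\tau(p)=p$.

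There is really no obstacle here: the only ingredients are the continuity of $\phi_\tau$ (which holds for any continuous flow, in particular a flow on a compact metric space), the definition of $\omega$-limit set through a subsequence of positive times, and the defining property of a kinematic spiral point. Compactness of $X$ is used only implicitly to guarantee that $\omega(x)$ is non-empty, which is not needed for the implication itself.

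A small remark worth noting in the write-up is that nothing forces $\omega(x)$ to be connected, nor forces its $\phi_\tau$-orbit structure to be as tidy as the discrete analogue would suggest; indeed the flow restricted to $\omega(x)$ is only required to have $\phi_\tau=\id$, and the topology of $\omega(x)$ can be as wild as in Remark \ref{rmkNontrivialSpiral}. This is consistent with the later discussion that expansive flows behave differently from expansive homeomorphisms precisely because of spiral phenomena.
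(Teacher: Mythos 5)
Your argument is correct and is exactly the one the paper intends: the paper's own proof consists of the single line ``It is direct from the definitions,'' and the direct argument is precisely your three-step computation with a sequence $t_n\to+\infty$, the continuity of $\phi_\tau$, and the spiral condition. Nothing further is needed.
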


\begin{proof}
  It is direct from the definitions.
\end{proof}

This implies that a non-trivial minimal set cannot have kinematic spiral points.
We now remark that Theorem \ref{teoMaKaMin} cannot be extended for kinematic expansive flows.

\begin{rmk}
A flow $\phi$ on a compact metric space $X$ 
is \emph{kinematic expansive} if for all $\epsilon>0$ there is $\expc>0$ such that 
if $\dist(\phi_t(x),{\phi}_t(y))<\expc$ for all $t\in\R$ then $y=\phi_s(x)$ for some $s\in(-\epsilon,\epsilon)$.
This definition is weaker than Bowen-Walters expansivity. 
Some non-trivial minimal examples are known. 
In \cite{Mat} Matsumoto proves that the two-torus admits 
a $C^0$ minimal kinematic expansive flow. 
In \cite{ArK} it is shown that no $C^1$ minimal 
flow on the two-torus is kinematic expansive.
On three-dimensional manifolds it is known that the horocycle flow of a compact surface of constant negative 
curvature is minimal. In \cite{Gura} Gura shows that this flow is \emph{separating}, i.e., 
there is $\expc>0$ such that if $\dist(\phi_t(x),\phi_t(y))<\expc$ for all $t\in\R$ then $y=\phi_s(x)$ for some $s\in\R$.
It {seems that} the horocycle flow is in fact kinematic expansive.
\end{rmk}

Since the definition of expansive flow introduced by Bowen and Walters is 
stated using time reparameterizations the following definition is natural.

\begin{df}
We say that $x\in X$ is a \emph{spiral point}
if there is a continuous function $h\colon \R\to\R$  
and $\tau>0$ such that 
$h(t)-t >\tau$ for all $t\geq 0$ and $\dist(\phi_t(x),\phi_{h(t)}(x))\to 0$ as 
$t\to+\infty$. 
\end{df}

\begin{rmk}
  In \cite{KS} the definition of spiral point is introduced using local cross sections 
  of the flow but the definitions are equivalent.
\end{rmk}

\begin{rmk}
  If there is $T>\tau$ such that $T>h(t)-t>\tau$ for all $t\geq 0$ (in the previous definition) 
  then every orbit in the $\omega$-limit set of $x$ is {periodic or singular. 
  This $\omega$-limit set may not be a single orbit. 
  Consider, for example, the suspension of the homeomorphism in Remark \ref{rmkNontrivialSpiral}.}
\end{rmk}

Spiral points appear in the Poincaré-Bendixon theory of two-dimensional flows.
In fact, if $\phi$ is a continuous flow on the two-dimensional sphere then every point is spiral.
{A \emph{closed orbit} is a closed subset of $X$ of the form $\{\phi_t(x):t\in\R\}$. 
Since $X$ is compact, every closed orbit is a periodic orbit or a stationary point.}
\begin{rmk}
It can be the case that the $\omega$-limit set of a spiral point contains non-closed orbits. 
An example is illustrated in Figure \ref{figSpiral}. 
In this case $h(t)-t\to+\infty$ as $t\to+\infty$. 
However, there is a fixed point (a special case of closed orbit) in the $\omega$-limit set of this spiral point.
\begin{figure}[h]
\center
\includegraphics[scale=.7]{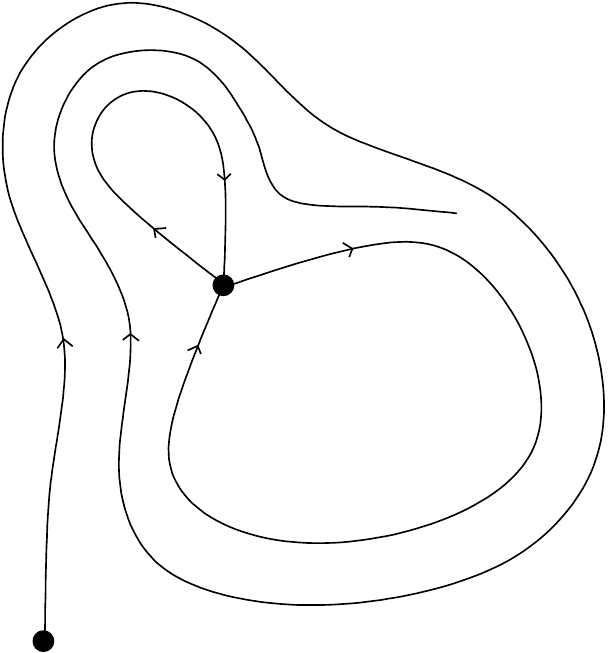}  
\caption{A spiral point in the plane.}
\label{figSpiral}
\end{figure}
\end{rmk}

Let us state the following question: if $\phi$ is a flow on a compact metric space $X$ and 
$x\in X$ is a spiral point, is there a closed orbit in $\omega(x)$?
% Note that a closed orbit can be a periodic orbit or a fixed point.
A positive answer, even assuming that $\phi$ is a minimal expansive flow, would prove (using \cite{KS}) 
that if a compact metric {space} admits a minimal expansive flow then $\dim(X)\leq 1$.

\begin{bibdiv}
\begin{biblist}

\bib{ArK}{article}{
author = {A. Artigue},
title = {Kinematic expansive flows},
journal = {Ergodic Theory and Dynamical Systems},
year = {2014},
pages = {1--32},
doi = {10.1017/etds.2014.65},
URL = {http://journals.cambridge.org/article_S0143385714000650}}

\bib{BW}{article}{
author={R. Bowen and P. Walters}, 
title={Expansive one-parameter flows}, 
journal={J. Diff. Eq.}, year={1972}, pages={180--193},
volume={12}}

\bib{BKS}{article}{
title={Minimal nonhomogeneous continua},
author={H. Bruin}, 
author={S. Kolyada}, 
author={L. Snoha},
journal={Colloq. Math.},
volume={95},
year={2003}, 
pages={123-132}}

\bib{F}{article}{
author={E. E. Floyd},
title={A nonhomogeneous minimal set},
journal={Bull. Amer. Math. Soc.},
volume={55}, 
year={1949},
pages={957--960}}

\bib{Gura}{article}{
author= {A. A. Gura},
title={Horocycle flow on a surface of negative curvature is separating},
journal= {Mat. Zametki},
year={1984},
volume={36},
pages={279--284}}

\bib{HW}{book}{
author={W. Hurewicz},
author={H. Wallman},
title={Dimension Theory},
publisher={Princeton Univ. Press}, 
year={1948}}

\bib{IN}{book}{
author={A. Illanes},
author={A. B. Nadler},
title={Hyperspaces, Fundamentals and recent advances},
publisher={Marcel Dekker, Inc},
year={1999}}

\bib{Ka93}{article}{
author={H. Kato},
title={Continuum-wise expansive homeomorphisms},
journal={Can. J. Math.},
volume={45},
number={3},
year={1993},
pages={576--598}}

\bib{KS}{article}{
author = {H. B. Keynes},
author={M. Sears},
title = {Real-expansive flows and topological dimension},
journal = {Ergodic Theory and Dynamical Systems},
volume = {1},
year = {1981},
pages = {179--195},}

\bib{Ma}{article}{
author={R. Mañé},
title={Expansive homeomorphisms and topological dimension},
journal={Trans. of the AMS}, 
volume={252}, 
pages={313--319}, 
year={1979}}

\bib{Mat}{article}{
author={S. Matsumoto},
title={Kinematic expansive suspensions of irrational rotations on the circle},
year={2014},
journal={arXiv:1412.0399}}

\end{biblist}
\end{bibdiv}

\noindent Departamento de Matemática y Estadística del Litoral, Salto-Uruguay\\
Universidad de la República\\
E-mail: artigue@unorte.edu.uy
\end{document}